
\documentclass{amsart}
\usepackage{amsmath, amsfonts, amssymb, euscript, graphicx, verbatim, amsthm,xy,xypic}


\newcommand{\cS}{\mathcal{S}}
\newcommand{\abs}[1]{\left\vert#1\right\vert}
\newcommand{\set}[1]{\left\{#1\right\}}

\newcommand{\vno}{\varnothing}
\newcommand{\one}{{1}}
\newcommand{\0}{{0}}
\newcommand{\sbs}{\subset}
\newcommand{\sbeq}{\subset}
\newcommand{\ra}{\rightarrow}
\newcommand{\ld}{\{1, \ldots, n\}}

\newcommand{\id}{\mathrm{id}}
\newcommand{\isd}{\mathcal{IS}_n}
\newcommand{\sd}{\mathcal{S}_n}
\newcommand{\isdd}{\mathcal{IS}_m}

\newcommand{\is}{\mathcal{IS}}
\newcommand{\nd}{\mathcal{N}_n}
\newcommand{\n}[1]{\mathcal{N}_{#1}}
\newcommand{\ndd}{\mathcal{N}_m}

\newcommand{\spx}{S^{PX}}
\newcommand{\pwr}{\,{\wre_p}\,}
\newcommand{\wisd}{\isdd \pwr \isd}
\newcommand{\wisdd}{\isdd \pwr \isd}
\newcommand{\rmi}[1]{R(\overrightarrow{M_{#1}})}
\newcommand{\vph}{\varphi}

\newcommand{\bv}[1]{\big|_{#1}}

\newcommand{\lc}{\mathbin\mathcal{L}}
\newcommand{\rc}{\mathbin\mathcal{R}}
\newcommand{\hc}{\mathbin\mathcal{H}}

\newcommand{\med}{\mathop{\vert}}

\newcommand{\al}{\alpha}


\DeclareMathOperator{\dom}{dom}
\DeclareMathOperator{\ran}{im}

\DeclareMathOperator{\aut}{Aut}
\DeclareMathOperator{\pa}{PAut}
\DeclareMathOperator{\wre}{\wr}

\newtheorem{theorem}{Theorem}
\newtheorem{proposition}{Proposition}
\newtheorem{lemma}{Lemma}
\newtheorem{corollary}{Corollary}
\theoremstyle{definition}
\newtheorem{definition}{Definition}
\theoremstyle{remark}
\newtheorem{remark}{Remark}

\begin{document}

\title[On isomorphisms of cross-sections
of $\wisdd$]{ON ISOMORPHISMS OF $\rc$- AND $\lc$-CROSS-SECTIONS\\ OF WREATH PRODUCTS\\ OF FINITE INVERSE SYMMETRIC SEMIGROUPS.}

\author{EUGENIA KOCHUBINSKA}
\address{Taras Shevchenko National University of Kyiv, Faculty of Mechanics and Mathematics, Volodymyrska str. 64, 01601, Kyiv, Ukraine.
}

%

\maketitle


\begin{abstract}
AMS Mathematics Subject Classification. Primary: 20M18, 20M20.
Secondary: 05C05

We classify $\rc$- and $\lc$-cross-sections of wreath
products of finite inverse symmetric semigroups $\wisdd$ up to
isomorphism. We show that every isomorphism of $\rc$ ($\lc$-)
cross-sections of $\wisdd$ is a conjugacy. As an auxiliary result,
we get that every isomorphism of $\rc$- ($\lc$-) cross-sections of
$\isd$ is also a conjugacy. We also compute  the number of
non-isomorphic $\rc$ ($\lc$-) cross-sections of $\wisdd$.
\end{abstract}

\keywords{Regular rooted tree; partial automorphism; finite inverse
symmetric semigroup; partial wreath product; Green's relations;
cross-sections.}

\section{Introduction}
Transformation semigroups play an important role in semigroup
theory. One of the reasons is that transformation semigroups appeared
in recent studies in general symmetry theory as  (full or partial)
endomorphisms semigroup of different combinatorial objects. 

The study of cross-sections of semigroups was started by
Renner~\cite{renner}. Later on different authors have studied
cross-sections of particular semigroups. $\hc$-cross-sections of
inverse symmetric semigroups were deeply studied by Cowan and
Reilly~\cite{CoReilly}, $\rc$- and $\lc$- cross-sections of $\isd$
were classified by Ganyushkin and Mazorchuk~\cite{GM}. $\rc$- and
$\hc$-cross-sections for the full finite transformation semigroup
$\mathcal{T}_n$   and for the infinite full transformation semigroup
$\mathcal{T}_X$ were classified by
Pyekhtyeryev~\cite{vasia1,vasia2}.

In the present paper we continue the study of $\rc$- and $\lc$-
cross-sections of partial wreath products of finite inverse symmetric
semigroups initiated in~\cite{endm}. We classify $\rc$- and $\lc$-
cross-sections of partial wreath products of finite inverse
semigroups up to isomorphism. The paper is organized as follows. All
necessary definitions are collected in Section~\ref{sec:basic}.
Section~\ref{sec:randl} contains known results on $\rc$- and
$\lc$-cross-sections of $\isd$ and $\isdd \pwr \isd$. Classification
of $\rc$- and $\lc$- cross-sections of $\isd$ and $\isdd \pwr \isd$
up to isomorphism is given in Section~\ref{sec:isomisn} and
Section~\ref{sec:isom} respectively.

\section{Basic definitions}\label{sec:basic}
For a set $X$, let $\mathcal{IS}(X)$ denote the set of all partial
bijections on $X$ with the natural composition law: $ f \circ g:
\dom(f) \cap f^{-1} (\dom(g)) \ni x \mapsto xfg$ for $f, \; g \in
\mathcal{IS}(X)$. The set $(\mathcal{IS}(X), \circ)$ is clearly an
inverse semigroup. This semigroup is called the \emph{full inverse
symmetric semigroup} on $X$. If $X=\nd$, where $\nd=\ld$, then
semigroup $\mathcal{IS}(\nd)$ is called the \emph{full inverse
symmetric semigroup of rank} $n$ and is denoted $\isd$. We
distinguish the element whose domain is $\vno$, it will be denoted
by $\0$. It is the zero of semigroup $\isd$. Also we distinguish an
identity map $\one:\isd \ra \isd$ defined by $x\one=x$ for all
$x\in\isd$. Clearly, this is the unity of $\isd$.

It is possible to introduce for elements of $\isd$ an analogue of
the cyclic decomposition for elements of the symmetric group
$\mathcal{S}_n$. We start with introducing two classes of elements.
Let $A=\{x_1, x_2, \ldots, x_k\} \sbs \nd$ be a subset. Denote by
$(x_1, x_2, \ldots, x_k)$ the unique element $f\in \isd$ such that
$x_i f=x_{i+1}$, $i=1,2, \ldots, k-1$, $x_k f=x_1$ and $xf=x, x
\notin A$. Assume that $A\neq \vno$ and denote by $[x_1, x_2,
\ldots, x_k]$ the unique element $f\in \isd$ such that
$\dom(f)=\nd\setminus\{x_k\}$ and $x_if=x_{i+1}$, $i=1,2, \ldots,
k-1$, and $xf=x$, $x \notin A$. The element $(x_1, x_2, \ldots,
x_k)$ is called a cycle and the element $[x_1, x_2, \ldots, x_k]$ is
called a chain, and the set $A$ is called the support of $(x_1, x_2,
\ldots, x_k)$ or $[x_1, x_2, \ldots, x_k]$. Any element of $\isd$
decomposes uniquely into a product of cycles and chains with
disjoint supports. This decomposition is called a \emph{chain
decomposition} \cite{GM1}. Denote by $\langle x_1, x_2, \ldots, x_k
\rangle$ the element $f\in \isd$ such that
$\dom(f)=A\setminus\set{x_k}$ and $x_i f=x_{i+1}$, $i=1,2, \ldots,
k-1$.

Recall the definition of a partial wreath product of semigroups. Let
$S$ be a semigroup, $(X,P)$ be the semigroup of partial
transformations of a set $X$. Define the set $\spx$ as the set of
partial functions from $X$ to $S$:
\begin{equation*}
\spx=\{f:A \ra S| \dom(f)=A, A \sbeq X\}.
\end{equation*}
Given $f,g \in \spx$, the product $fg$ is defined as:
\begin{equation*}
\dom(fg)=\dom(f)\cap\dom(g), (fg)(x)=f(x)g(x) \text{\ for all } x\in
\dom(fg).
\end{equation*}
For $a\in P, f\in \spx$, define $f^a$ as:
\begin{equation*}
\begin{gathered}
(f^a)(x)=f(xa),\ \dom(f^a)=\{x \in \dom(a); xa\in \dom(f)\}.
\end{gathered}
\end{equation*}

\begin{definition}
{The partial wreath product} of the semigroup $S$ with the semigroup
$(X,P)$ of partial transformations of the set $X$ is the set
$$\{(f,a)\in \spx \times (X,P)\,|\,\dom(f)=\dom(a) \}$$ with product defined by
$(f,a)\cdot (g,b)=(fg^a,ab).$ We will denote the partial wreath
product of semigroups $S$ and $(X,P)$ by $S \pwr P$.
\end{definition}
\begin{remark}
Some authors use the term wreath product. We follow terminology from the book of J.D.P Meldrum \cite{Meldrum}, where this construction is called partial wreath product.
\end{remark}

It is known \cite{Meldrum} that a partial wreath product of
semigroups is a semigroup. Moreover, a partial wreath product of
inverse semigroups is an inverse semigroup. An important example of
an inverse semigroup is the semigroup $\pa T_n^k$ of partial
automorphisms of a $k$-level $n$-regular rooted tree $T_n^k$. Here
by a partial automorphism we mean a root-preserving tree
homomorphism defined on a rooted subtree  of $T_n^k$. It is shown in
\cite{comb} that
$$\pa T_n^k \simeq \underset{k}{\underbrace{\isd \pwr \isd \pwr
\cdots \pwr \isd}}.$$ This is an analogue of the well-known fact
that $\aut T_n^k \simeq \mathcal{S}_n\wr\dots \wr \mathcal{S}_n$.

\begin{remark}
Let $\nd'=\nd \cup\{\emptyset\}$, and we can consider semigroup $\isd$ as a subsemigroup $K_{n+1}$ of full transformation semigroup $\mathcal{T}_{n+1}$ acting on the set $\nd'$: for every $a\in \isd$ we define $a'\in K_{n+1}$ as $xa'=xa$ for $x\in \dom(a)$ and $xa'=\emptyset$ for $x\notin\dom(a)$.
Let $S=K_{m+1} \wr K_{n+1}$, where $\wr$ is a wreath product of semigroups (see, e.g., \cite[Chapter 1]{eilenberg}, \cite[Chapter 10]{Meldrum}). We define equivalence $\sim$ on the semigroup $S$: $(f,a) \sim (g,a) \Leftrightarrow f(x)=g(x)$ for $x\in \nd'$ such that $xa\neq \emptyset$. Then we identify every element $(f,a)\in \isdd\pwr\isd$ with an element $(f',a')\in S/\sim$ in a following way. We set $xa'=xa$ for $x\in \dom(a)$ and $xa=\emptyset$ otherwise, and $f'(x)=f(x)$ for $x\in \dom(a)$, $f'(x)$ for $x\notin \dom(a)$ can be chosen arbitrarily. So we can consider partial wreath product $\isdd \pwr \isd$ as a quotient $S/\sim$.
\end{remark}

\section{$\rc$- and $\lc$-cross-sections of $\wisdd$}\label{sec:randl}
In this section we give a brief description of known results on
$\rc$- and $\lc$-cross-sections of the semigroup $\wisdd$.

Let $S$ be an inverse semigroup with identity. Recall that Green's
$\rc$-relation on inverse semigroup $S$ is defined by $a\rc b
\Leftrightarrow aS=bS$, similarly, Green's $\lc$-relation is defined
by $a\lc b \Leftrightarrow Sa=Sb$. It is well-known (e.g.
\cite{GM1}) that Green's relations on $\isd$ can be described as
follows: $a \rc b \Leftrightarrow \dom(a)=\dom(b)$; $a \lc b
\Leftrightarrow \ran(a)=\ran(b)$.

$\rc$- and $\lc$-relations on  $\wisdd$ are described in
\begin{proposition}{\em\cite{comb}} \label{green_2}
Let $(f,a)$, $(g,b) \in \wisdd$. Then
\begin{enumerate}
\item $(f,a)$ $\rc$ $(g,b)$ if and only if
$\dom(a)=\dom(b)$ and for any $z \in \dom(a)$\ \  $f(z)\rc
g(z)$;

\item $(f,a)\lc(g,b)$ if and only if
$\ran(a)=\ran(b)$ and for any $z \in \ran(a)$\ \
$g^{b^{-1}}(z)\lc f^{a^{-1}}(z)$, where $a^{-1}$ is the inverse
for $a$.
\end{enumerate}
\end{proposition}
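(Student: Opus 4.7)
The plan is to exploit the fact (noted after the definition of the partial wreath product, following Meldrum) that $\wisdd$ is an inverse semigroup, so its Green's relations admit the standard characterisations $x \rc y \Leftrightarrow xx^{-1}=yy^{-1}$ and $x \lc y \Leftrightarrow x^{-1}x=y^{-1}y$. The task therefore reduces to writing down the idempotents $(f,a)(f,a)^{-1}$ and $(f,a)^{-1}(f,a)$ explicitly in $\wisdd$ and reading off the conditions coordinatewise.

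First I would identify $(f,a)^{-1}$. A direct check of the inverse axioms in an inverse semigroup shows that $(f,a)^{-1} = (\tilde f, a^{-1})$, where $\tilde f(y) = f(ya^{-1})^{-1}$ for $y \in \ran(a)$. Using $(f,a)(g,b)=(fg^a,ab)$, one computes $\tilde f^a(x) = \tilde f(xa) = f(x)^{-1}$ for $x \in \dom(a)$, so
\[
(f,a)(f,a)^{-1} = (f\cdot \tilde f^a,\, aa^{-1}),
\]
whose first coordinate sends $x \in \dom(a)$ to the idempotent $f(x)f(x)^{-1}$ in $\isdd$. A symmetric computation yields
\[
(f,a)^{-1}(f,a) = (\tilde f \cdot f^{a^{-1}},\, a^{-1}a),
\]
whose first coordinate sends $z \in \ran(a)$ to $f^{a^{-1}}(z)^{-1} f^{a^{-1}}(z)$.

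Equating the first idempotent with $(g,b)(g,b)^{-1}$ forces $aa^{-1}=bb^{-1}$, i.e.\ $\dom(a)=\dom(b)$, together with $f(x)f(x)^{-1}=g(x)g(x)^{-1}$ for every $x$ in that common domain; by the standard description of $\rc$ on $\isdd$ recalled at the start of the section, this is exactly $f(x)\rc g(x)$. This gives (1). Equating the second idempotent with $(g,b)^{-1}(g,b)$ forces $a^{-1}a=b^{-1}b$, i.e.\ $\ran(a)=\ran(b)$, together with $f^{a^{-1}}(z)^{-1} f^{a^{-1}}(z) = g^{b^{-1}}(z)^{-1} g^{b^{-1}}(z)$ on $\ran(a)$, which translates to $f^{a^{-1}}(z) \lc g^{b^{-1}}(z)$ in $\isdd$; by the symmetry of $\lc$, this is (2).

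The main obstacle I anticipate is not algebraic but the domain bookkeeping forced by partiality: one must verify that $\tilde f$, $\tilde f^a$ and $f^{a^{-1}}$ have exactly the domains claimed, that $\dom(f \cdot \tilde f^a) = \dom(aa^{-1})$ so that the proposed pair actually lies in $\wisdd$, and that the inverse axiom $(f,a)(f,a)^{-1}(f,a)=(f,a)$ holds on the nose (this uses $aa^{-1}a=a$ and the easy observation that $f^{aa^{-1}}=f$ since $aa^{-1}$ is the identity on $\dom(a)=\dom(f)$). Once these partial-function equalities are discharged, both equivalences drop out of the two idempotent formulas.
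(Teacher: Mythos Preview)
Your argument is correct. The paper itself does not supply a proof of this proposition: it is quoted from \cite{comb} and stated without proof, so there is nothing to compare against directly. Your route via the inverse-semigroup identities $x\rc y\Leftrightarrow xx^{-1}=yy^{-1}$ and $x\lc y\Leftrightarrow x^{-1}x=y^{-1}y$, together with the explicit formula $(f,a)^{-1}=(\tilde f,a^{-1})$ with $\tilde f(y)=f(ya^{-1})^{-1}$, is the standard and natural one, and the idempotent computations you outline are accurate; in particular $\tilde f\cdot f^{a^{-1}}$ indeed evaluates at $z\in\ran(a)$ to $f^{a^{-1}}(z)^{-1}f^{a^{-1}}(z)$. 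The domain bookkeeping you flag as the only real obstacle is routine once one uses $\dom(f)=\dom(a)$ and $\dom(\tilde f)=\dom(a^{-1})=\ran(a)$, so nothing is missing.
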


Now let $\rho$ be an equivalence relation on $S$. A subsemigroup $T
\sbs S$ is called \emph{cross-section with respect to} $\rho$ (or
simply $\rho$-cross-section) provided that $T$ contains exactly one
element from every equivalence class. Correspondingly, cross-sections
with respect to $\rc$- $(\lc$-) Green's relations are called $\rc$-
($\lc$-) cross-sections. Note that every $\rc$- ($\lc$-) equivalence
class of inverse semigroup contains exactly one idempotent. Then the number of elements
in every $\rc$- ($\lc$-) cross-section of inverse semigroup $S$ is $\abs{E(S)}$, where $E(S)$ is the
subsemigroup of all idempotents of semigroup $S$.

Observe that a subsemigroup $H$ of semigroup $\isd$ is an
$\rc$-cross-section if and only if for every subset $A\subseteq \nd$
it contains exactly one element $a$ such that $\dom(a)~=~A$.

Before describing $\rc$- and $\lc$-cross-sections in semigroup
$\wisdd$, we give first the description of $\rc$- and
$\lc$-cross-sections in semigroup $\isd$ presented in \cite{GM}.
Let $\nd=M_1 \sqcup M_2 \sqcup \ldots \sqcup M_s$~be an arbitrary
decomposition of $\nd=\{1,2, \ldots, n\}$ into disjoint union of
non-empty blocks, where the order of blocks is irrelevant. Assume
that a linear order is fixed on the elements of every block: $M_i =
\set{m_1^i<m_2^i<\dots<m_{\abs{M_i}}^i}$.

For each pair $i,j$,\  $1 \leq i \leq s$, $1 \leq j \leq |M_i|$
define $a_{i,j}=[m_1^i, m_2^i, \ldots, m_j^i]$ and denote by
$R(\overrightarrow{M_1}, \overrightarrow{M_2}, \ldots,
\overrightarrow{M_s})=\langle a_{i,j} |\; 1 \leq i \leq s, 1\leq j
\leq |M_i|\rangle \sqcup \{e\}$.

\begin{theorem}{\em\cite{GM}}
For an arbitrary decomposition  $\nd=M_1 \sqcup M_2 \sqcup \ldots \sqcup
M_s$ and arbitrary linear orders on the elements of every block of
this decomposition the semigroup $R(\overrightarrow{M_1},
\overrightarrow{M_2}, \ldots, \overrightarrow{M_s})$ is an
$\rc$-cross-section of $\isd$. Moreover, every $\rc$-cross-section
is of the form  $R(\overrightarrow{M_1}, \overrightarrow{M_2},
\ldots, \overrightarrow{M_s})$ for some decomposition $\nd=M_1
\sqcup M_2\sqcup \ldots \sqcup M_s$ and some linear orders on the
elements of every block.
\end{theorem}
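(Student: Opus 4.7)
The plan is to prove both directions: (i) every $R(\overrightarrow{M_1},\ldots,\overrightarrow{M_s})$ is an $\rc$-cross-section of $\isd$, and (ii) every $\rc$-cross-section of $\isd$ arises this way. Since an $\rc$-class of $\isd$ is determined by its common domain, an $\rc$-cross-section has exactly $2^n=|E(\isd)|$ elements, one per subset of $\nd$.

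For direction (i), I would verify two product rules: first, generators from different blocks commute because their supports $M_i,M_{i'}$ are disjoint; second, within a single block the chain calculus yields the rewriting relations $a_{i,j}a_{i,j'}=a_{i,j'-1}a_{i,j}$ for $j>j'\ge 2$ and $a_{i,j}a_{i,1}=a_{i,j}$. Together these put every product into the increasing normal form $a_{i_1,j_1}\cdots a_{i_r,j_r}$ with $j$-indices strictly increasing inside each block (or the identity $e$), and a direct chain computation shows such a product has domain exactly $\nd\setminus\{m_{j_1}^{i_1},\ldots,m_{j_r}^{i_r}\}$. Ranging over index sets gives one element of $R$ per subset of $\nd$, so $R$ is an $\rc$-cross-section.

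For direction (ii), let $T$ be any $\rc$-cross-section and $t_k\in T$ the unique element with $\dom(t_k)=\nd\setminus\{k\}$. Each $t_k$ has rank $n-1$ and decomposes as a chain $[y_1,\ldots,y_r=k]$ plus disjoint cycles on $\nd\setminus\{y_1,\ldots,y_r\}$. I claim $t_k$ must be a pure chain: any cycle of length $s\ge 2$ would produce some iterate $t_k^N$ sharing its domain with $t_k^{N+1}$ while acting differently, contradicting uniqueness of $T$-representatives. Defining $\pi(k):=y_1$ (the element missing from $\ran(t_k)$), I next claim $\pi$ is acyclic: any $\pi$-cycle $k_0\to k_1\to\cdots\to k_{p-1}\to k_0$ of length $p\ge 2$ makes the product $t_{k_0}t_{k_1}\cdots t_{k_{p-1}}$ have $\dom=\nd\setminus\{k_0\}$ (image of each factor matches the domain of the next) and $\ran=\nd\setminus\{k_0\}$ (the last factor's image wraps back to $k_0$), so by uniqueness this product equals $t_{k_0}$; but $\ran(t_{k_0})=\nd\setminus\{k_1\}\ne\nd\setminus\{k_0\}$, a contradiction. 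Hence $\pi$ is a forest; its weakly connected components partition $\nd$ into the blocks $M_i$ with roots $m_1^i$ (the $\pi$-fixed points), and the linear order on each $M_i$ is recovered from the inclusion chain of supports of the $t_k$'s. Verifying $t_{m_j^i}=a_{i,j}$ then completes the identification $T=R(\overrightarrow{M_1},\ldots,\overrightarrow{M_s})$.

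The main obstacle is the pair of rigidity claims in (ii): that each $t_k$ is a pure chain, and that $\pi$ is acyclic. Both are established by exhibiting two distinct elements of $T$ with identical domains whenever the forbidden structure is present (iterate comparisons for the first, the cyclic product for the second). Once these structural constraints are in hand, the remaining bookkeeping — normal form verification in (i) and translating the forest-plus-supports data into an ordered partition in (ii) — is routine.
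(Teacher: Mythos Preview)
The paper does not prove this theorem: it is quoted from \cite{GM} as background, with no argument supplied. So there is no ``paper's proof'' to compare against; I can only assess your outline on its own merits.

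Direction (i) is sound. The commutation of generators from different blocks and the rewriting rules $a_{i,j}a_{i,j'}=a_{i,j'-1}a_{i,j}$ (for $j>j'\ge 2$) and $a_{i,j}a_{i,1}=a_{i,j}$ do yield a normal form indexed by subsets of $\nd$, and the domain computation is straightforward.

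Direction (ii) has a genuine gap. Your two rigidity claims --- that each $t_k$ is a pure chain, and that $\pi$ has no cycle of length $\ge 2$ --- are correct and well argued. (In fact a stronger fact falls out cheaply: since $\ran(t_k)=\dom(t_{\pi(k)})$ one has $t_k t_{\pi(k)}=t_k$, forcing $t_{\pi(k)}=[\pi(k)]$ and hence $\pi^2=\pi$.) But the sentence ``the linear order on each $M_i$ is recovered from the inclusion chain of supports of the $t_k$'s'' assumes something you have not proved: that these supports \emph{are} totally ordered by inclusion. A priori nothing you have established rules out, say, $t_2=[1,2]$ and $t_3=[1,3]$ in the same block, with incomparable supports $\{1,2\}$ and $\{1,3\}$. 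Excluding this requires a third rigidity argument of the same flavour as your first two (for instance, if the supports of $t_k$ and $t_{k'}$ are incomparable then suitable products such as $t_k t_{k'}$ and $t_{k'} t_k$ share a domain but differ). Even once nesting is secured, you still need that the chains agree on overlaps --- i.e.\ that $t_{k'}$ is an initial segment of $t_k$ when $\mathrm{supp}(t_{k'})\subset\mathrm{supp}(t_k)$ --- before you can conclude $t_{m_j^i}=a_{i,j}$. None of this is ``routine bookkeeping''; it is the substantive part of recovering the ordered partition, and your outline skips it.
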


Since the map $a \mapsto a^{-1}$ is an anti-isomorphism of the
semigroup $\isd$, which sends  $\rc$-classes to $\lc$-classes, then
$\lc$-cross-sections are described similarly.

Now we turn to the description of $\rc$- and $\lc$-cross-sections of
semigroup $\wisd$. It follows from Proposition \ref{green_2} that a
subsemigroup $H \sbs\wisd$ is an $\rc$-cross-section if and only if
for any $A \sbs \nd$ and any collection of sets $B_{x_1}, \ldots,
B_{x_{|A|}}\sbs \ndd$ there exists exactly one element $(f,a)\in H$
satisfying $\dom(a)=A$ and $\dom(x_i f)=B_{x_i}$ for all $x_i \in
A$. Later on we will use  this fact frequently.

Define the  map $\vph_{\mu}:\prod_{i=1}^k(S\pwr \is{(M_i)}) \ra
S\pwr \isd$ in the following manner: $\vph_{\mu}$ maps the product
$\prod_{i=1}^k(f_i,a_i)$ to the element $(f,a)$ such that
$a\bv{M_i}=a_i$, $f\bv{M_i}=f_i$.
\begin{theorem}{\em\cite{endm}}
\label{thm:rcross} Let $R(\overrightarrow{M_1},
\overrightarrow{M_2}, \ldots, \overrightarrow{M_k})$ be an
$\rc$-cross-section of semigroup $\isd$,\break $R_1, \ldots$, $R_k$
be $\rc$-cross-sections of semigroup $\isdd$. Then
$$R=\vph_{\mu}\left((R_1\pwr R(\overrightarrow{M_1}) )\times (R_2 \pwr
R(\overrightarrow{M_2}) ) \times \ldots \times (R_{k}\pwr
R(\overrightarrow{M_k}) )\right)$$  is an $\rc$-cross-section of semigroup $\isdd
\pwr \isd$.

Moreover, every  $\rc$-cross-section of semigroup $\isdd \pwr \isd$
is isomorphic to
$$(R_1 \pwr R(\overrightarrow{M_1}) )\times (R_2 \pwr
R(\overrightarrow{M_2}) ) \times \ldots \times (R_{k}\pwr
R(\overrightarrow{M_k}).$$
\end{theorem}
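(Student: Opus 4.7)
My plan is to prove the two parts of the theorem separately: first the construction, then the structural converse.

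\textbf{Forward direction.} I would start by observing that every element of $R(\overrightarrow{M_1},\ldots,\overrightarrow{M_k})$ is block-preserving, since each generator $a_{i,j}=[m_1^i,\ldots,m_j^i]$ acts entirely within the block $M_i$. Because of this, both components of the partial wreath multiplication $(f,a)(g,b)=(fg^a,ab)$ decouple across blocks, so the restriction of $\vph_{\mu}$ to $\prod_i(R_i\pwr R(\overrightarrow{M_i}))$ is an injective semigroup homomorphism onto $R$. To verify that $R$ is an $\rc$-cross-section I apply the criterion stated right after Proposition~\ref{green_2}: given $A\sbeq\nd$ and sets $B_z\sbeq\ndd$ for $z\in A$, split $A$ into the pieces $A_i=A\cap M_i$, use the $\rc$-cross-section property of each factor $R_i\pwr R(\overrightarrow{M_i})$ to produce a unique factorwise element with the prescribed data on $A_i$, and glue via $\vph_{\mu}$ into the unique required element of $R$.

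\textbf{Extracting the block decomposition.} Now let $H$ be an arbitrary $\rc$-cross-section of $\wisdd$. For each $A\sbeq\nd$ let $(f_A^{\top},a_A^{\top})\in H$ be the unique element satisfying $\dom(a)=A$ and $\dom(f(z))=\ndd$ for every $z\in A$. Since an element of $\isdd$ of full domain is a permutation of $\ndd$, a direct computation with the wreath formula shows that the product of two such ``top'' representatives is again of top shape; consequently $\{a_A^{\top}:A\sbeq\nd\}$ is closed under composition and is therefore an $\rc$-cross-section of $\isd$. By the theorem of Ganyushkin and Mazorchuk quoted earlier, this set equals $R(\overrightarrow{M_1},\ldots,\overrightarrow{M_k})$ for some decomposition and linear orders. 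A symmetric computation with the ``bottom'' representatives (all $B_z=\vno$, whence $f(z)=\0$) shows that $\{a_A^{\bot}\}$ is also an $\rc$-cross-section of $\isd$; multiplying a top element of domain $\nd$ by a bottom element of arbitrary domain $A$ forces $a_A^{\top}=a_A^{\bot}$, and multiplying an arbitrary $(f,a)\in H$ on the right by the bottom element of domain $\nd$ forces $a=a_{\dom(a)}^{\top}$. Thus the $\isd$-part of every element of $H$ is block-preserving and uniquely determined by its domain.

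\textbf{Fibre cross-sections and the final isomorphism.} Let $H_i=\{(f,a)\in H:\dom(a)\sbeq M_i\}$. This is a subsemigroup of $H$ whose $\isd$-parts lie in $R(\overrightarrow{M_i})$. Repeating the top/bottom argument one level down on the values $f(z)$, I extract for each $z\in M_i$ an $\rc$-cross-section of $\isdd$; a further top--bottom comparison shows this fibre cross-section is independent of $z\in M_i$, producing a well-defined $R_i\sbeq\isdd$, and that $f(z)\in R_i$ for every $(f,a)\in H_i$ and every $z\in\dom(a)$. The assignment $(f,a)\mapsto((f(z))_{z\in\dom(a)},a)$ then supplies an isomorphism $H_i\to R_i\pwr R(\overrightarrow{M_i})$, and matching an arbitrary element of $H$ to the tuple of its block restrictions yields $H\cong\prod_{i=1}^k(R_i\pwr R(\overrightarrow{M_i}))$.

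The hardest part, which I expect to be the main obstacle, is establishing the intrinsic nature of the fibre cross-section $R_i$: namely, that the values $f(z)$ in $H$ at a given $z\in M_i$ depend only on $\dom(f(z))$ and not on how $\dom(a)$ extends beyond $\{z\}$, and that the resulting cross-section does not depend on $z\in M_i$. Both independencies should follow from uniqueness in $H$ combined with well-chosen products against top and bottom representatives, but the bookkeeping is one level more intricate than the corresponding argument used to pin down the $\isd$-component.
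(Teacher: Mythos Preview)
The paper does not itself prove Theorem~\ref{thm:rcross}; it is quoted from~\cite{endm}, so there is no in-paper argument to compare against line by line. Your forward direction is correct, and your extraction of the underlying $\rc$-cross-section of $\isd$ from the second components via the ``top'' and ``bottom'' representatives is also sound: multiplying an arbitrary $(f,a)\in H$ on the right by the bottom element $(\0,\id_{\nd})$ indeed forces $a=a^{\bot}_{\dom(a)}$, so every second component lies in a fixed $R(\overrightarrow{M_1},\dots,\overrightarrow{M_k})$.

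The genuine gap is in your fibre analysis. The assertion that one can find a single $\rc$-cross-section $R_i\sbs\isdd$ with $f(z)\in R_i$ for every $(f,a)\in H_i$ and every $z\in\dom(a)$ is false in general; the difficulty you flagged as ``the hardest part'' is not mere bookkeeping but an actual obstruction. Concretely, take any $R_0\pwr R(\overrightarrow{M_i})$ with $\abs{M_i}\ge 2$ and conjugate it by $(\vartheta,\id_{M_i})\in\cS_m\wre\cS(M_i)$ with $\vartheta$ non-constant. The result is still an $\rc$-cross-section, but for $(g,a)$ in the conjugate one has $g(z)=\vartheta(z)^{-1}f(z)\,\vartheta(za)$, so even at a fixed vertex $z$ and with $\dom(g(z))$ held fixed (say $\dom(g(z))=\ndd$), the value $g(z)$ varies with $za$. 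Hence no single $R_i$ collects all fibre values, and your proposed map $(f,a)\mapsto((f(z))_{z\in\dom(a)},a)$ cannot land in $R_i\pwr R(\overrightarrow{M_i})$.

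The correct repair, which is precisely what the present paper invokes as ``Lemma~3.6 in~\cite{endm}'' in Step~3 of the proof of Theorem~\ref{thm:perhapsthelast}, is to \emph{first conjugate} $H_i$ by an element $(\vartheta,\id)$ built from the top representatives: if $(f_j,a_j)\in H_i$ is the unique element with $a_j=\langle m_1^i,m_j^i,m_{j+1}^i,\dots,m_{\abs{M_i}}^i\rangle$ and $\dom f_j(m_1^i)=\ndd$, set $\vartheta(m_j^i)=f_j(m_1^i)$. After this normalisation the fibre values do fall into a common $R_i$, and your concluding isomorphism goes through. Top/bottom multiplications alone cannot perform this straightening, because they only control domains, not the particular bijections realising them.
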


A map $(f,a) \mapsto (f,a)^{-1}$ is an anti-isomorphism of semigroup
$\wisdd$, that sends $\rc$-classes to $\lc$-classes. It is also
clear that it maps $\rc$-cross-sections to $\lc$-cross-sections and
vice-versa. Hence dualizing Theorem \ref{thm:rcross}, one gets a
description of $\lc$-cross-sections.

\section{Isomorphisms of $\rc$- and  $\lc$-cross-sections of
$\isd$}\label{sec:isomisn}

Clearly, it is enough to study problem of isomorphism only for
$\rc$-cross-sections. The result for $\lc$-cross-sections is
analogous.

For an arbitrary  $\rc$-cross-section
$R=R(\overrightarrow{M_1},\overrightarrow{M_2},\ldots,
\overrightarrow{M_s})$ we call an idempotent $e\in E(R)$ \emph{block
idempotent} if $\dom(e) \sbs M_i$ for some $i$. The idempotents of
$R$ are described below.
Let $$R(\overrightarrow{M_i})=\set{a_{i,j} \med 1\leq j \leq
\abs{M_i}}\cup {e}.$$ Then $$R(\overrightarrow{M_1},
\overrightarrow{M_2}, \ldots, \overrightarrow{M_s}) \simeq
R(\overrightarrow{M_1})\times \ldots \times
R(\overrightarrow{M_s}).$$ Isomorphism is established by
$\vph(a)=(a\bv{M_1}, \ldots, a\bv{M_s})$.

For an $\rc$-cross-section $R=R(\overrightarrow{M_1},
\overrightarrow{M_2}, \ldots, \overrightarrow{M_s})$ element $e\in
R$ is an idempotent iff $e \bv{M_i} \in E(\rmi{i})$ for all $i=1,
\ldots, s$. An element $a\in \rmi{i}$ with domain
$\dom(a)=\set{m_{j_1}^i, \ldots, m_{j_k}^i\med j_1<\ldots < j_k}$
acts in a following way: $(m_{j_l}^i)a=m^i_{\abs{M_i}-k+l}$. Taking
it into account we get that idempotents of $\rmi{i}$ are described
as $e=(m_j^i)(m_{j+1}^i)\ldots(m^i_{\abs{M_i}})$.

Recall that on the set of idempotents the  partial order $\preceq$
is defined as $e \preceq f \Leftrightarrow ef=fe=e$.
\begin{proposition} \label{prop:blok}
An idempotent  $e \in E(R)$ is a block idempotent if and only if
there exist no idempotents $e_1 \neq 0$, $e_2 \neq 0$ such that $e_1
\preceq e$, $e_2 \preceq e$, $e_1 e_2 = 0$.
\end{proposition}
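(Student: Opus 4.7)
The plan is to reduce the statement to elementary bookkeeping with final segments of the ordered blocks $M_i$. First I would make explicit the shape of a general idempotent of $R$. Under the isomorphism $R \simeq R(\overrightarrow{M_1}) \times \cdots \times R(\overrightarrow{M_s})$ recalled above, an idempotent $e \in E(R)$ corresponds to a tuple $(e|_{M_1}, \ldots, e|_{M_s})$, and each $e|_{M_i}$ is either the empty partial map or the partial identity $(m_{j_i}^i)(m_{j_i+1}^i)\cdots(m_{|M_i|}^i)$ on a final segment of $M_i$. For any two such partial identities one has $e' \preceq e \iff \dom(e') \subseteq \dom(e)$, and $e_1 e_2$ is the partial identity on $\dom(e_1)\cap\dom(e_2)$, so $e_1 e_2 = 0 \iff \dom(e_1)\cap\dom(e_2) = \vno$.

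With these reductions the direct implication is almost immediate. Suppose $e$ is a block idempotent with $\dom(e) \subseteq M_i$; the case $e = 0$ is vacuous, so assume $\dom(e)$ is a nonempty final segment of $M_i$ and in particular contains the largest element $m_{|M_i|}^i$. Any nonzero $e_1 \in E(R)$ with $e_1 \preceq e$ satisfies $\dom(e_1) \subseteq \dom(e) \subseteq M_i$; being a nonempty final segment of $M_i$, $\dom(e_1)$ also contains $m_{|M_i|}^i$, and the same holds for any nonzero $e_2 \preceq e$. Hence $m_{|M_i|}^i \in \dom(e_1) \cap \dom(e_2)$ and $e_1 e_2 \neq 0$.

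For the converse I would argue by contrapositive. If $e$ is not a block idempotent, then $\dom(e)$ meets at least two distinct blocks, say $M_i$ and $M_j$. Define $e_1$ by $e_1|_{M_i} = e|_{M_i}$ and $e_1|_{M_k} = \0$ for $k \neq i$, and analogously $e_2$ using $M_j$. Both tuples are of the shape described in the first paragraph and therefore belong to $E(R)$; both are nonzero, and both are $\preceq e$; finally $\dom(e_1) \cap \dom(e_2) \subseteq M_i \cap M_j = \vno$, so $e_1 e_2 = 0$, as required.

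No serious obstacle arises — the proposition is essentially a rephrasing of the fact that block idempotents live entirely inside a single block while non-block idempotents admit an obvious cross-block split. The only point requiring a moment of care is the verification that the witnesses $e_1, e_2$ constructed in the converse really belong to $R$, which is immediate from the block-by-block description of $E(R)$ established at the outset.
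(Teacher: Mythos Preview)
Your proof is correct and follows essentially the same approach as the paper's. The only cosmetic difference is in the contrapositive direction: the paper takes $e_2 = e|_{\overline{M_i}}$ (the restriction of $e$ to the complement of the chosen block $M_i$), whereas you take $e_2 = e|_{M_j}$ for one specific second block $M_j$; both witnesses work equally well, and your version is in fact slightly more explicit about why the constructed $e_1, e_2$ lie in $E(R)$.
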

\begin{proof}
\emph{Necessity.} Let $e$ be block idempotent. Let $e_1 \preceq e$,
$e_2 \preceq e$ be idempotents of $R$ such that  $e_1 \neq 0$, $e_2
\neq 0$.  If $M_i=\set{x_1, \ldots, x_k\mid x_1<x_2<\ldots<x_k}$,
then for an arbitrary block idempotent $e$ it holds $\dom(e)=\{x_j,
\ldots, x_k\}$, $j \leq k$. Since $e, e_1, e_2 \in E(R)$, then $e_1
e_2 \neq 0$.

\emph{Sufficiency.} Assume the contrary. It means that there is no
index $i$ such  that $\dom(e)\sbs M_i$. Consider a block $M_i$ for
which $\dom(e) \cap M_i \neq \vno$. Then $e_1=e\bv{M_i}$ and
$e_2=e\bv{\overline{M_i}}$ are idempotents. Evidently, $e_1e_2=0$.
From condition $\dom(e) \cap M_i \neq \vno$ it follows $e_1\neq 0$,
and from the fact that $e$ is not block idempotent, it follows
$e_2\neq 0.$

\end{proof}

\begin{lemma}\label{lem:block}
Let $R_1, R_2$ be $\rc$-cross-sections of the semigroup $\isd$,
$\vph\colon R_1 \ra R_2$ be an isomorphism. Then there exists a
permutation $\Theta \in \sd$ such that for any block idempotent
$e\in R_1$ and every $x \in \{1, \ldots, n\}$ it holds
$x e\Theta=x\Theta\vph(e)$.
\end{lemma}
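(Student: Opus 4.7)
The plan is to read off the required permutation $\Theta$ from the action of $\vph$ on the idempotent subsemilattice. Since $\vph$ is a semigroup isomorphism, it restricts to a semilattice isomorphism $E(R_1)\to E(R_2)$ and, by Proposition~\ref{prop:blok}, sends block idempotents of $R_1$ onto block idempotents of $R_2$. Writing $R_2=R(\overrightarrow{N_1},\ldots,\overrightarrow{N_t})$ with $N_j=\{n_1^j<\cdots<n_{\abs{N_j}}^j\}$, the description of $E(\rmi{i})$ given just before Proposition~\ref{prop:blok} shows that the nonzero block idempotents with domain in $M_i$ are exactly the identities on the suffixes $\{m_{\abs{M_i}-k+1}^i,\ldots,m_{\abs{M_i}}^i\}$ for $k=1,\ldots,\abs{M_i}$, and these form a chain under $\preceq$; the analogous statement holds inside every $N_j$.

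Next I would extract a block-permutation $\sigma$ of $\{1,\ldots,s\}$ and a position-by-position identification. The key observation is that two nonzero block idempotents $e_1,e_2\in R_1$ satisfy $e_1e_2\neq 0$ if and only if their domains lie in the same block: two suffixes of the same $M_i$ always share $m_{\abs{M_i}}^i$, while suffixes of different blocks are disjoint. Since $\vph$ preserves products and zero, the entire chain of nonzero block idempotents in $M_i$ is mapped into the chain of one single block $N_{\sigma(i)}$ of $R_2$; because $\vph$ is order-preserving on $E(R_1)$, the resulting map between chains is an order-isomorphism, so $\abs{M_i}=\abs{N_{\sigma(i)}}$ and
\[
\vph\bigl(\mathrm{id}_{\{m_{\abs{M_i}-k+1}^i,\ldots,m_{\abs{M_i}}^i\}}\bigr)=\mathrm{id}_{\{n_{\abs{M_i}-k+1}^{\sigma(i)},\ldots,n_{\abs{M_i}}^{\sigma(i)}\}}.
\]
I would then define $\Theta\in\sd$ by $m_j^i\Theta:=n_j^{\sigma(i)}$ for all $i,j$; this is a permutation of $\nd$ because $\sigma$ is a bijection of blocks and $\abs{M_i}=\abs{N_{\sigma(i)}}$.

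Finally, by the displayed identification, $\dom(\vph(e))=\dom(e)\Theta$ for every block idempotent $e\in R_1$. Since both $e$ and $\vph(e)$ act as the identity on their respective domains, for $x\in\dom(e)$ the equation $xe\Theta=x\Theta\vph(e)$ reduces to $x\Theta=x\Theta$, while for $x\notin\dom(e)$ both sides are undefined because $\Theta$ is a bijection of $\nd$; the zero idempotent is handled trivially since $\vph(0)=0$. The main subtlety I expect is verifying that $\sigma$ is well-defined, i.e., that \emph{all} nonzero block idempotents inside a single $M_i$ are sent into one and the same $N_j$; but this is forced by the product-dichotomy above together with the fact that $\vph$ preserves zero, after which the length equality $\abs{M_i}=\abs{N_{\sigma(i)}}$ and the explicit formula for $\Theta$ are routine consequences of order-preservation.
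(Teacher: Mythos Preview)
Your proposal is correct and follows essentially the same approach as the paper: use Proposition~\ref{prop:blok} to see that $\vph$ permutes block idempotents, observe that the poset of block idempotents is a disjoint union of chains (one per block) glued at~$0$, and read off $\Theta$ from the induced isomorphism of this poset. The only cosmetic difference is packaging: the paper defines bijections $\nu_1,\nu_2$ sending a nonzero block idempotent to the least element of its domain and sets $\Theta=\nu_2\circ\vph\circ\nu_1^{-1}$, whereas you first extract a block permutation $\sigma$ and then match positions; these are the same construction. One small point you should make explicit is that $\sigma$ is a \emph{bijection} (not merely well-defined): this follows from applying your product-dichotomy to $\vph^{-1}$ as well, and is needed for $\abs{M_i}=\abs{N_{\sigma(i)}}$ and for $\Theta$ to be a genuine permutation.
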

\begin{proof}
Let $R_1=R(\overrightarrow{M_1},\overrightarrow{M_2},\ldots,
\overrightarrow{M_s})$. The domains of block idempotents defined on
different blocks are disjoint. Thus if $i\neq j$ and $\dom(e_i)\sbs
M_i$, $\dom(e_j)\sbs M_j$, then $e_i e_j =0$. From the definition of
the partial order, we have for block idempotents defined on the same
block that $e_i \preceq e_j$ if and only if $\dom(e_i)\sbs
\dom(e_j)$. Then the set $E_b(R)$ of block idempotents of
$\rc$-cross-section $R$ as a poset can be drawn as:
\begin{center}
\leavevmode
\xymatrix {
\circ & \\
\circ\ar@{-}[u] & \circ &\ & \circ\\
\circ\ar@{-}[u]^{|M_1|} & \circ\ar@{-}[u]^{|M_2|} & \ldots  & \circ\ar@{-}[u]^{|M_s|}\\
\circ\ar@{.}[u] & \circ\ar@{.}[u] &\ldots & \circ\ar@{.}[u]\\
\ &\ & \vno \ar@{-}[ur]\ar@{-}[ul]\ar@{-}[ull]&\ &\ }
\end{center}

It follows from Proposition \ref{prop:blok} that the property to be
a block idempotent is preserved under an isomorphism. On the other
hand, the isomorphism $\vph$ preserves the order $\preceq$. Thus
$\vph$ defines a poset isomorphism between $E_b(R_1)$ and
$E_b(R_2)$. If $M_i=\set{x_1<\ldots < x_k}$, then for a block
idempotent $e\in R_1$ with $\dom(e)=\{x_j, \ldots, x_k\}$ we put
$\nu_1(e)=x_j$. We define $\nu_2(e)$ for a block idempotent $e\in
R_2$ similarly. It is easily checked that $\Theta$ defined by
$i\Theta=\vph(i \nu_1^{-1})\nu_2^{\vphantom{1}}, i\in\{1,
\ldots, n\}$, is as required.
\end{proof}

\begin{theorem}\label{thm:isomisn}
Let $R_1, R_2$ be $\rc$-cross-sections of the semigroup $\isd$,
$\vph\colon R_1 \ra R_2$ be an isomorphism. Then there exists an
element $\Theta \in \sd$ such that for any $\alpha \in R_1$ and $x
\in \{1, \ldots, n\}$ the following equality holds
$x \alpha \Theta= x\Theta\vph(\alpha)$.
\end{theorem}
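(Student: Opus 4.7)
The plan is to show the stronger statement that $\vph$ coincides with conjugation by the permutation $\Theta$ supplied by Lemma~\ref{lem:block}; the desired identity $x\alpha\Theta=x\Theta\vph(\alpha)$ is then the pointwise translation of $\vph(\alpha)=\Theta^{-1}\alpha\Theta$ for all $\alpha\in R_1$. First, since every idempotent $e\in E(R_1)$ is a commuting product $e=\prod_k e^{(k)}$ of block idempotents under the factorisation $R_1\simeq\prod_k\rmi{k}$, the relation $\vph(e^{(k)})=\Theta^{-1}e^{(k)}\Theta$ from Lemma~\ref{lem:block} extends by multiplicativity to $\vph(e)=\Theta^{-1}e\Theta$ for every $e\in E(R_1)$. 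Unpacking the definition of $\Theta$ together with the poset isomorphism on block idempotents induced by $\vph$ (which preserves the chain rank within each block and permutes blocks by some permutation $\sigma$ of block indices), one also sees that $\Theta$ restricts to an order-preserving bijection $M_i\to M'_{\sigma(i)}$, namely $m_l^i\mapsto m'^{\sigma(i)}_l$.

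Next, consider the conjugation automorphism $\psi\colon\isd\to\isd$ defined by $\psi(\alpha)=\Theta^{-1}\alpha\Theta$. On the chain generator $a_{i,j}=[m_1^i,\ldots,m_j^i]$ of $R_1$, a direct calculation gives $\psi(a_{i,j})=[m_1^i\Theta,\ldots,m_j^i\Theta]=[m'^{\sigma(i)}_1,\ldots,m'^{\sigma(i)}_j]=a'_{\sigma(i),j}$, a chain generator of $R_2$. Since the $a_{i,j}$ together with the identity generate $R_1$, this shows $\psi(R_1)\subseteq R_2$; combined with injectivity of $\psi$ and $\abs{R_1}=\abs{R_2}=2^n$, it follows that $\psi|_{R_1}\colon R_1\to R_2$ is an isomorphism, and by the previous paragraph $\psi|_{R_1}$ and $\vph$ agree on $E(R_1)$. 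The theorem thus reduces to showing that the automorphism $\eta:=(\psi|_{R_1})^{-1}\circ\vph$ of $R_1$, which fixes $E(R_1)$ pointwise, is the identity of $R_1$.

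For this last step I would reduce to the single-block setting: the characterisation $\rmi{i}=\{x\in R_1\mid xe_1^{(k)}=e_1^{(k)}\text{ for all }k\ne i\}$, combined with $\eta(e_1^{(k)})=e_1^{(k)}$, forces $\eta$ to preserve each factor $\rmi{i}$, so it suffices to show that an idempotent-fixing automorphism of a single-block cross-section is the identity. Within $\rmi{i}$ I would characterise each generator $a_{i,j}$ (for $j\ge 2$) by the two properties (i) $a_{i,j}^j$ equals the idempotent with domain $\{m_{j+1}^i,\ldots,m_{\abs{M_i}}^i\}$ (the zero of $\rmi{i}$ when $j=\abs{M_i}$), and (ii) $a_{i,j}^{j-1}\notin E(\rmi{i})$. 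Both are preserved by $\eta$, so the conclusion follows from the uniqueness claim that (i) and (ii) identify $a_{i,j}$ among the non-idempotents of $\rmi{i}$: parametrising an $\alpha\in\rmi{i}$ by $\dom(\alpha)=B\sqcup T_j$ with $T_j=\{m_{j+1}^i,\ldots,m_{\abs{M_i}}^i\}$ and $B\subseteq\{m_1^i,\ldots,m_j^i\}$, and tracing the order-preserving ``shift-by-$(\abs{M_i}-\abs{\dom(\alpha)})$'' action on $B$, one finds that (i) forces every orbit starting in $B$ to die within $j$ steps while (ii) forces some orbit to survive $j-1$ steps; together these squeeze $\abs{B}=j-1$ and $B=\{m_1^i,\ldots,m_{j-1}^i\}$, i.e., $\alpha=a_{i,j}$. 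The main obstacle is precisely this combinatorial uniqueness inside a single block, because many non-idempotent elements (the intermediate iterates $a_{i,j}^k$ for $1\le k<j$ and various products $e\cdot a_{i,j}^k$) share the terminal power $a_{i,j}^j$, so distinguishing $a_{i,j}$ hinges on the finer chain-length condition (ii), whose verification rests on the explicit shift-description of cross-section elements in terms of their domains.
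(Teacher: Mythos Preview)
Your proposal is correct and takes a genuinely different route from the paper.

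The paper argues directly. For an element $\xi\in R_1$ supported in a single block $M_i$, it computes $\dom(\vph(\xi))$ by applying $\vph$ to the chain of relations
\[
e_{|M_i|}\xi=\0,\ \dots,\ e_{i_l+1}\xi=\0,\ e_{i_l}\xi\neq\0,\quad
e_{i_l}\xi=\dots=e_{i_{l-1}+1}\xi\neq e_{i_{l-1}}\xi,\ \dots
\]
where the $e_m$ are the block idempotents on $M_i$. Since Lemma~\ref{lem:block} sends $e_m$ to the corresponding block idempotent $f_m$ on $\Theta(M_i)$, this forces $\dom(\vph(\xi))=\Theta(\dom(\xi))$; and because every element of a single-block cross-section is determined by its domain, $\vph(\xi)=\Theta^{-1}\xi\Theta$ follows at once. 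The extension to arbitrary $\alpha$ is then a one-line restriction argument.

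Your approach instead first verifies that $\psi=\Theta^{-1}(\cdot)\Theta$ is itself an isomorphism $R_1\to R_2$ (by checking $\psi(a_{i,j})=a'_{\sigma(i),j}$), and then reduces the theorem to the lemma that any automorphism $\eta$ of $R_1$ fixing $E(R_1)$ pointwise is the identity. You prove this by characterising $a_{i,j}$ as the unique $\alpha\in\rmi{i}$ with $\alpha^j=\id_{T_j}$ and $\alpha^{j-1}\notin E$. This is valid; one small imprecision in your sketch is that the upper bound $|B|\le j-1$ does not follow from ``all orbits die within $j$ steps'' alone (there could a priori be several short orbits), but rather from the observation that $m_j^i\notin B$: if $m_j^i\in B$ then $m_j^i\alpha\in T_j$ is a fixed point, so the orbit never dies, contradicting $\dom(\alpha^j)=T_j$. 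With this in hand, $B\subseteq\{m_1^i,\dots,m_{j-1}^i\}$ and your lower bound from (ii) give $B=\{m_1^i,\dots,m_{j-1}^i\}$.

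What each approach buys: the paper's argument is shorter and stays close to the concrete ``domain $=$ element'' description, never needing to analyse powers of elements. Your argument isolates the conceptually clean statement that an idempotent-fixing automorphism of an $\rc$-cross-section is trivial, which is a recognisable theme (these cross-sections are $\hc$-trivial inverse semigroups) and could be reused; the price is the extra combinatorial verification for the generators.
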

\begin{proof}
Let $\Theta$ be the permutation provided by Lemma \ref{lem:block}.

Let $\xi \in R_1=R(\overrightarrow{M_1},\overrightarrow{M_2},\ldots,
\overrightarrow{M_s})$ be such that $\dom(\xi)\sbs M_i$ and
$\ran(\xi)\sbs M_i$ for some $i$. It means $\id_{M_i}\xi=\xi$, i.e.
$\xi$ acts inside the block $M_i$. Thus,
$$\vph(\xi)=\vph(\id_{M_i}\xi)=\vph(\id_{M_i})\vph(\xi)=\id_{\Theta(M_i)}\vph(\xi)$$
Therefore, $\zeta=\vph(\xi)$ acts inside the block $\Theta(M_i)$.

Let $$M_i=\{x_1, x_2, \ldots, x_{k_i} \med x_1<x_2<\ldots
<x_{k_i}\},$$ $$\Theta(M_i)=\{y_1, y_2, \ldots, y_{k_i} \med
y_1<y_2<\ldots <y_{k_i}\},$$ where $y_i=x_i\Theta$, $i=1,\ldots,
k_i$. Let $\dom(\xi)=\{x_{i_1}, \ldots, x_{i_l}\}, i_1<\ldots< i_l$,
$\dom(\zeta)=\{y_{j_1}, \ldots, y_{j_p}\}, j_1<\ldots<j_p$.

Then $x_{i_m}\xi=x_{i_{\abs{M_i}}-l+m}$, $m=1, \ldots, l$,
$y_{j_m}\zeta=y_{j_{\abs{M_i}}-p+m}$, $m=1, \ldots, p$. Denote
$e_m=\id_{\{x_m, \ldots, x_{\abs{M_i}}\}}$, $f_m=\id_{\{y_m, \ldots,
y_{\abs{M_i}}\}}$. We have then
\begin{equation} \label{e:eidemp}
e_{\abs{M_i}}\xi=\0,\ldots,  e_{i_l+1}\xi=\0,\  e_{i_l}\xi \neq \0,
\end{equation}
and
\begin{equation} \label{e:fidemp}
f_{\abs{M_i}}\zeta=\0,\  \ldots,\  f_{j_p+1}\zeta=\0,
f_{j_p}\zeta \neq \0.
\end{equation}

Applying to (\ref{e:eidemp}) the isomorphism $\vph$ and using Lemma
\ref{lem:block}, we get
\begin{equation} \label{e:ffidemp}
f_{\abs{M_i}}\zeta=\0,\ \ldots, f_{i_l+1}\zeta=\0,\
f_{i_l}\zeta \neq \0.
\end{equation}
It follows from equalities (\ref{e:fidemp}) and (\ref{e:ffidemp})
that $l=p$ and $i_l=j_p$.

Further, we have
\begin{gather*}
e_{i_l}\xi=e_{i_l-1}\xi=\ldots=e_{i_{l-1}-1}\xi\neq e_{i_{l-1}}\xi, \\
f_{i_l}\zeta=f_{i_l-1}\zeta=\ldots=f_{j_{l-1}-1}\zeta\neq
f_{j_{l-1}}\zeta.
\end{gather*}
Similarly, we get $j_{l-1}=i_{l-1}$. By induction we obtain
$i_m=j_m$ and $l=p$. Then for any $x \in \dom(\xi)$ it holds:
$x\xi\Theta=x\Theta\vph(\xi)$, because
$y_{i_m}\zeta=x_{i_m} \xi \Theta$.

We will show now that equality $x\alpha\Theta=x\Theta\vph(\al)$
is true for any $\al \in R_1$.

Let $\al \in R_1$, $x\in M_i$. Then
\begin{gather*}
x \alpha\Theta=x\al\bv{M_i}\Theta=x\,\id_{M_i}\,\al\Theta=x\Theta\,\vph(\id_{M_i}\,\al)
=\\=x\Theta(\vph(\id_{M_i})\,\vph(\al))=x\Theta(\id_{\Theta(M_i)}\,\vph(\al))=x\Theta\,\vph(\al).\end{gather*}

Thus, for every $\alpha \in R_1$,  $x\in\{1, \ldots, n\}$ it is true
that $x\al\Theta=x\Theta\,\vph(\al)$
\end{proof}
\begin{remark}
It is proved in \cite{GM} that two $\rc$- ($\lc$-) cross-sections in
$\isd$ are isomorphic  if and only if they are conjugate. Theorem
\ref{thm:isomisn} strengthens this result: \emph{every} isomorphism
of $\rc$- ($\lc$-) cross-sections is a conjugacy.
\end{remark}

\section{Isomorphisms of $\rc$- and $\lc$-cross-sections of
$\isdd\pwr\isd$}\label{sec:isom}

In this section we generalize the result of the previous section
that every isomorphism of $\rc$- ($\lc$-) cross-sections of inverse
symmetric semigroup $\isd$ is a conjugacy, to partial wreath products
of finite inverse symmetric semigroups.

\begin{theorem}\label{thm:perhapsthelast}
Let $R'$, $R''$~be $\rc$-cross-sections of the semigroup
$\isdd\pwr\isd$, $\vph\colon R'\to R''$ be an isomorphism. Then
there exists such an element  $\Theta = (\vartheta,\theta)\in
\cS_m\wre \cS_n$ that
$$
\vph\big((f,a)\big) = \Theta^{-1} (f,a) \Theta.
$$
In other words, if $(f,a)\in R'$ and $(g,b)=\vph\big((f,a)\big)$,
then $\dom b = \theta\big(\dom(a)\big)$ and for any $x\in\dom(a)$
\begin{equation*}
x a \theta= x\theta b, \quad
g\big(\theta(x)\big) = \vartheta^{-1}(x) f(x)
\vartheta(xa).
\end{equation*}
\end{theorem}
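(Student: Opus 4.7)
The plan is to reduce to Theorem~\ref{thm:isomisn} in two stages: first a block-level reduction, then within each block the separate extraction of a base permutation $\theta$ and fibre data $\vartheta$.

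\emph{Block reduction.} Define the block identity idempotents $e'_i=(f^*_i,\id_{M'_i})\in E(R')$ with $f^*_i(x)=\id_\ndd$, and similarly $e''_j\in E(R'')$. I would prove an analogue of Proposition~\ref{prop:blok}: $\{e'_1,\ldots,e'_k\}$ is the unique finest orthogonal decomposition of $\iota'$ in $E(R')$. The argument: idempotent domains in $R(\overrightarrow{M'_i})$ are the non-empty suffixes of $M'_i$, and any two such suffixes intersect (both contain the last element of $M'_i$); hence every orthogonal decomposition $e=e_1\vee e_2$ in $E(R')$ with $e_1e_2=0$ corresponds to a set partition of $\{i:\text{base of }e\text{ meets }M'_i\}$, and $e$ is orthogonally-indecomposable iff its base lies in a single block. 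Because $\vph$ preserves idempotents, products, orthogonality and the partial order~$\preceq$, it bijects $\{e'_i\}\leftrightarrow\{e''_j\}$, giving a bijection $\pi$ of block indices with $|M'_i|=|M''_{\pi(i)}|$ and restricting to isomorphisms of corners $T'_i:=e'_iR'e'_i\to T''_{\pi(i)}:=e''_{\pi(i)}R''e''_{\pi(i)}$; a direct computation identifies $T'_i=R'_i\pwr R(\overrightarrow{M'_i})$. So it suffices to prove the theorem when the base cross-section is single-block.

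\emph{Single-block case.} Take $T'=R_1\pwr R(\overrightarrow{\nd_\sigma})$ and $T''=R'_1\pwr R(\overrightarrow{\nd_{\sigma'}})$. The ``top base idempotents'' $\mathfrak{t}_{A_k}=(f^*,\id_{A_k})$ (with $A_k$ the $k$-th suffix of $\nd_\sigma$ and $f^*(x)=\id_\ndd$) form a chain of length $n+1$ in $E(T')$. I would identify this chain inside $E(T')$ intrinsically---for example, as the maximal chain of idempotents $\mathfrak{t}_{A_0}\prec\mathfrak{t}_{A_1}\prec\ldots\prec\mathfrak{t}_{A_n}=\iota'$ in which each $\mathfrak{t}_{A_k}$ is maximum in the fibre of the natural semilattice retraction $E(T')\to E(R(\overrightarrow{\nd_\sigma}))$ over $\id_{A_k}$, phrased purely in terms of $\preceq$ and products. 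Since $\vph$ preserves these invariants, it sends the $T'$-chain order-preservingly onto the analogous chain $\{\mathfrak{t}_{A'_k}\}$ in $E(T'')$. Writing $x_k$ (resp. $x'_k$) for the element added at the $k$-th step of each chain, the rule $\theta(x_k)=x'_k$ defines a permutation $\theta\in\cS_n$. For each $x\in\nd$, the set of elements $(f,\id_{A_k})\in T'$ with $A_k$ the suffix starting at $x$, $f(y)=\id_\ndd$ for $y\in A_k\setminus\{x\}$ and $f(x)\in R_1$ arbitrary forms a subsemigroup isomorphic to $R_1$; under $\vph$ it maps isomorphically onto the analogous subsemigroup of $T''$, and Theorem~\ref{thm:isomisn} applied to $\isdd$ yields a conjugating permutation $\vartheta(x)\in\cS_m$.

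\emph{Combining and verifying.} With $\Theta=(\vartheta,\theta)\in\cS_m\wre\cS_n$, for $(f,a)\in R'$ and $\vph((f,a))=(g,b)$, the identities $\dom b=\theta(\dom a)$, $xa\theta=x\theta b$, and $g(\theta(x))=\vartheta^{-1}(x)f(x)\vartheta(xa)$ are verified first on elements of the base subsemigroup $\tilde R_0=\{(f^a,a):a\in R_0\}$ (reducing to the $\theta$-conjugation of Stage~2), then on the fibre-isolating elements above (where the third identity reduces to the $\vartheta(x)$-conjugation), and then extended multiplicatively using the product formulas of Proposition~\ref{green_2}. The main technical hurdle is the intrinsic algebraic characterization of the top base idempotents $\mathfrak{t}_{A_k}$ and the fibre-isolating subsemigroups inside the single-block cross-section: unlike the block idempotents of Proposition~\ref{prop:blok}, these are not cut out by orthogonality alone, so one must combine~$\preceq$, the structures of $E(R(\overrightarrow{\nd_\sigma}))$ and $E(R_1)$, and the explicit combinatorics of suffixes. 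Once these characterizations are established and seen to be preserved by $\vph$, the conjugation formula follows routinely from Proposition~\ref{green_2}.
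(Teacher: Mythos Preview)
Your block reduction is essentially sound: the idempotents $e'_i=(\one_{M'_i},\id_{M'_i})$ are indeed the maximal idempotents $e$ for which no two nonzero idempotents below $e$ multiply to zero (the wreath-product analogue of Proposition~\ref{prop:blok}), because $e_1e_2=0$ in $\isdd\pwr\isd$ forces the \emph{base} components to have product zero, and two nonempty suffixes of a single $M'_i$ always meet. So $\vph$ must permute the $e'_i$, and the reduction to a single block is legitimate.

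The genuine gap is in your single-block step, and you flag it yourself: you need an intrinsic description of the ``top base idempotents'' $\mathfrak{t}_{A_k}=(\one,\id_{A_k})$, and you propose to use ``the natural semilattice retraction $E(T')\to E(R(\overrightarrow{\nd_\sigma}))$, phrased purely in terms of $\preceq$ and products''. But that retraction is precisely $e\mapsto e'\!\cdot e$ for the special idempotent $e'=(\0,\id_M)$, and you have given no way to single out $e'$ (or, equivalently, to recognise which idempotents share a base) without already knowing how $\vph$ acts on the base coordinate. Trying to pick out $\mathfrak{t}_{A_{l-1}}$ among the coatoms of the identity, or to use central idempotents, does not obviously work either: the coatoms with full base and one shrunken fibre are not order-theoretically distinguished from the one with base $A_{l-1}$, and the central idempotents of $T'$ depend recursively on those of $R_1$.

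The paper circumvents this entirely by a short counting argument carried out \emph{before} any block reduction. For an idempotent $e=(f_e,a_e)$ one has
\[
\abs{\{\zeta\in R: e\zeta=\zeta\}}=\prod_{x\in\dom(a_e)}\bigl(1+2^{\abs{\dom(f_e(x))}}\bigr),
\]
and this equals $2^n$ only when $\dom(a_e)=\nd$ and every $f_e(x)=\0$; hence $\vph$ fixes $e'=(\0,\id_{\nd})$. Then $e'R'=(\0,R'_1)$ exhibits the base cross-section inside $R'$, Theorem~\ref{thm:isomisn} yields $\theta$ directly, and only afterwards does one restrict to blocks and read off the fibre conjugator (a single $\theta_0$ after a normalising inner automorphism, rather than one $\vartheta(x)$ per point). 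Your outline becomes a complete proof once you insert this counting step to recover the base; without it, the ``main technical hurdle'' you identify is not merely technical but is the missing idea.
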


\begin{proof}
\emph{Step 1.} For an idempotent $e=(f_e,a_e)\in \isdd\pwr\isd$
denote
$$N_e = \set{\zeta=(f,a)\in \isdd\pwr\isd\mid e\zeta = \zeta}.$$
If $\zeta=(f,a)\in N_e$, then $\dom(a)\subset \dom(a_e)$ and
$\dom(xf)\subset \dom(x f_e)$ for any $x\in \dom(a)$. But every
element of $\rc$-cross-section is completely defined by the sets
$\dom(a)$ and $\dom(xf)$, $x\in\dom(a)$. Thus the number of
elements of
 $\rc$-cross-sections $R$, which are in $N_e$, is equal to
\begin{align*}
\abs{N_e\cap R}=\sum_{A\subset \dom(a_e)}\ \prod_{x\in A}
2^{\abs{\dom(x f_e)}} = \prod_{x\in\dom(a_e)}
\big(1+2^{\abs{\dom(x f_e)}}\big).
\end{align*}

Take an element $e' = (\0,\id_{\nd})$.  We have
 $e'\in R'$.

Set $R'\cap N_{e'}$ contains $2^n$ elements (they are elements
$(\0,a)$). Since $\vph$ is an isomorphism, then
$e''=(f'',a'')=\vph(e')$ is an idempotent,  and $\abs{R''\cap
N_{e''}}=2^n$. Therefore, we have $$ \prod_{x\in\dom(a'')}
\big(1+2^{\abs{\dom(f''(x))}}\big)=2^n.
$$
This clearly implies $\abs{\dom(f''(x))}=0$ for all $x\in\dom(a'')$ and
$\abs{\dom(a'')}=n$, which means $e''=e'$.

\emph{Step 2.} Define
$$
R'_1 = \set{a\in \isd\mid (f,a)\in R'}
$$
and similarly $R''_1$. Both $R_1'$ and $R_1''$ are
$\rc$-cross-sections of $\isd$. For every element $(f,a)\in
R$ the product $$(f,a)(\0, e)=(\0, e)(f,a)=(\0, a)\in R.$$ We have then $
(\0,R'_1) = e'R'$ and
$
(\0,R''_1) = e' R''$. Since $\vph$ is an isomorphism, then
$\vph\big((\0,R'_1)\big) = (\0,R''_1)$ and a map $\vph_1\colon
R'_1\to R''_1$, which is defined as $(\0,\vph_1(x)) = \vph((\0,x))$,
is an isomorphism. From Theorem~\ref{thm:isomisn} it follows that
there exists an element $\theta\in \cS_n$ such that $\vph_1(a) =
\theta^{-1} a \theta$.

Take an arbitrary element $(f,a)\in R'$ and put
$(g,b)=\vph\big((f,a)\big)$. We get
$$
(\0,\vph_1(a))=\vph\big((\0,a)\big)=\vph\big(e'(f,a)\big)=
e'\vph\big((f,a)\big) = e'(g,b) = (\0,b),
$$
which implies $b=\vph_1(a) = \theta^{-1} a \theta$. Renumbering
elements of the set $\nd$ in a proper way, we could obtain
$R'_1=R''_1 = R(M_1)\times\dots\times R(M_s)$ and $\vph_1 = \id$.
Then, evidently, $b=a$.

\emph{Step 3.} Define the ``maximal'' block idempotents:
$e_i=(\one_{M_i},\id_{M_i})$, where $\one_{M_i}(x) = \id_{\ndd}$,
$x\in M_i$. According to Lemma \ref{lem:block} maximal idempotents
are preserved under isomorphism. As $\vph$ acts identically on the
second component, we have $\vph(e_i) = e_i$. Thus
\begin{equation}\label{eq:first}
\vph\big((f|_{M_i},a|_{M_i})\big)= \vph\big(e_i(f,a)\big)=e_i
\vph\big((f,a)\big) =e_i (g,a) = (g|_{M_i},a|_{M_i}).
\end{equation}

Since $M_i^R=M_i$ and $\nd=M_1\sqcup M_2 \sqcup \ldots \sqcup M_s$,
then there exists a monomorphism from $R$ to $\prod_{i=1}^s(\isdd
\pwr \is(M_i))$ defined in the following way: $$(f,a) \mapsto
\left((f\bv{M_1}, a\bv{M_1}), \ldots,
(f\bv{M_s},a\bv{M_s})\right).$$

Using the monomorphism (defined as above) from $\rc$-cross-section
to Cartesian product of $\rc$-cross-sections of $\isdd\pwr \is(M_i)$
and equality (\ref{eq:first}),  we have that it is enough to prove
the proposition for ``restriction $\vph$ to cofactors''. That is we
may suppose that $R'_1 = R''_1 = R(M)$, and that the isomorphism
$\vph$ is identical on the second component (because evidently, this
property is preserved under restriction). Without loss of generality
assume that  $M=\set{1,\dots,l}$ with natural order.

Let we have some $\rc$-cross-section $R$. Let $(f_i, a_i)$, $i=1,
\ldots, l$, be  elements of $\rc$-cross-section $R$ such that
$$a_i=\langle 1,i, i+1, \ldots, l-1, l \rangle, \dom(f(1))=\ndd.$$
Put $\vph_i=f_i(1)$ for an element $(f_i,a_i) \in R$. Consider now
the map $\Theta\colon  \isdd \pwr\is(M) \ra \isdd \pwr\is(M)$, which
acts as follows: $(f,a) \mapsto (g,a)$, where for $x \in \dom(a)$
$g(x)=\vph_x f(x) \vph^{-1}_{x^a}$. It is easy to check that this
map is an isomorphism and an isomorphic image of $\rc$-cross-section
is an $\rc$-cross-section. In such a way we define the maps
$\Theta'$ for $R'$ and $\Theta''$ for $R''$.

Therefore, applying to both cross-sections $R'$ and $R''$ recently
defined maps $\Theta'$ and $\Theta''$, we obtain
$\rc$-cross-sections $\Theta'(R')=R'_2\pwr R(M)$ and
$\Theta''(R'')=R''_2\pwr R(M)$ (see Lemma 3.6 in \cite{endm}). Since
both $\Theta'$ and $\Theta''$ are clearly conjugacies and act
identically on the second component, then we may assume $R'=R'_2\pwr
R(M)$, $R''=R''_2\pwr R(M)$, and isomorphism $\vph$ acts identically
on the second component.

\emph{Step 4.} Consider the set $R_2 = \set{(f,\id_{\set{l}})\in
R'}$. Evidently, $R_2\simeq R_2'$ (the isomorphism is defined by
$\psi\big((f,\id_{\set{l}})\big)=f(l)$). Since $\vph$ is identical
on the second component, then $\vph(R_2) = \set{(f,\id_{\set{l}})\in
R''}\simeq R_2''$, hence $R_2''\simeq R_2'$. Moreover, as $\sigma =
\psi^{-1} \vph \psi$ is an isomorphism of $\rc$-cross-sections of
$\isdd$, then there exists $\theta_0\in\cS_m$ such that $\sigma(a) =
\theta^{-1}_0 a \theta_0^{\vphantom{-1}}$. Put $x\vartheta =
\theta_0$, $x\in M$. We will show that $\Theta=(\vartheta,\id_M)\in
\cS_m\wre\cS_l$ is as required.

For $j\in M$ denote $$\tau_j=
                                  \begin{cases}
                                    (f,\langle j,l \rangle)\in R', \dom(f(j))=\n{m}, & \text{ if } j<l; \\
                                    (f, \id_{\{l\}})\in R', \dom(f(l))=\ndd, & \text{ if } j=l.
                                  \end{cases}
                             $$

As $f(j)\in R_2'$, then $f(j) = \id_{\ndd}$. We claim that
$\vph(\tau_j) = \tau_j$. Indeed, $\tau_j$ is a unique element of
$R'$ of the form $(f,\langle j,l \rangle)$, which cannot be
represented as a product of an element of $R'$ of such a form and a
non-idempotent element from $R_2$. Since $\vph$ is identical on the
second component, then the set of the elements of the form
$(f,\langle j,l \rangle)$ is preserved under the action of $\vph$,
and the same is true for $R_2$. Moreover, $\vph$ preserves the
operation and idempotents, hence $\vph$ should preserve an element
$\tau_j$ also.

Further, for arbitrary $i,j\in M, i<j$, denote by
$\Lambda(i,j):=\{i,j+1,$ $j+2,\dots,l\}$ and consider an element
$\lambda_{ij}=(f,a)\in R'$ such that $\dom(a)=\Lambda(i,j)$ (i.e.
$ia = j$, $ka=k$ for $k>j$) and $\dom(xf) = \ndd$ (i.e.
$f(x)=\id_{\ndd}$) for $x\in\Lambda(i,j)$. We claim that
$\vph({\lambda_{ij}}) = \lambda_{ij}$. Indeed, if we denote
$\Lambda(j):= \set{j,j+1,\dots,l}$, then element $\lambda_{ij}$ can
be characterized by the property: it is the only element of $R'$ of
the form $(f,\id_{\Lambda(i,j)})$, which cannot be represented as
the product of the element of such a form and non-idempotent element
of $R'$ of the form $(f,\id_{\Lambda(j)})$. As this property is
preserved under $\vph$, we have $\vph({\lambda_{ij}}) =
\lambda_{ij}$.

Consider now an arbitrary element $(f,a)\in R'$. Let $(g,a) =
\vph\big((f,a)\big)$. Take any $i \in \dom(a)$ and let $ia=j$.
Take $(h, \id_{\{l\}})$ such that $\dom(h(l))=\dom(f(i))$ (i.e.
$h(l)=f(i)$). The elements $z_1=(f,a)\tau_j$ and $z_2=\tau_i
(h,\id_{l})$ are in $R'$, since $z_1 \rc z_2$, then $z_1=z_2$.
Applying to this equality  $\vph$, we get $(g,a)\tau_j = \tau_i
\vph\big((h,\id_{\set{l}})\big)$, which implies $g(i) =
\theta^{-1}_0 h(l) \theta_0^{\vphantom{-1}} = \theta^{-1}_0 f(i)
\theta_0^{\vphantom{-1}}$.
\end{proof}
\begin{remark}
In the terms of semigroup of transformations of rooted trees this
theorem states that every isomorphism of $\rc$- ($\lc$-)
cross-sections of semigroup of partial ``rooted'' automorphisms of a
rooted regular two-level tree $T$ is a conjugacy.
\end{remark}
\begin{remark}
Because of recursive definition of partial wreath product, one can
ge\-neralize this Theorem for $\rc$-($\lc$-) cross-sections of
semigroup $\is{}_{n_k}\pwr \is_{n_{k-1}}\pwr \ldots \pwr \is_{n_1}$.
\end{remark}
Let $R= R(\overrightarrow{M_1}, \overrightarrow{M_2}, \ldots,
\overrightarrow{M_s})$ be an $\rc$-cross-section of $\isd$. The
vector $(u_1, \ldots, u_n)$, where $u_k=\abs{\{i \med
\abs{M_i}=k\}}$, $1\leq k \leq n$, is called the type of
$\rc$-cross-section $R$. The type of $\lc$-cross-section is defined
in a similar way. It is proven in \cite{GM} that two $\rc$- ($\lc$-)
cross-sections is isomorphic if and only if they have the same type.
It is also shown that the number of non-isomorphic $\rc$-
($\lc$-)cross-sections is equal to $p_n$, where $p_n$ is the number
of decompositions of $n$ into the sum of positive integers, where
the order of summands is not important.
\begin{corollary}
The number of non-isomorphic $\rc$- ($\lc$-) cross-sections of
$\wisdd$ is
$$\sum_{\substack{j_1,j_2,\dots,j_n\ge
0\\j_1+2j_2+\dots+nj_n=n}}\prod_{i=1}^m\binom{p_m+j_i-1}{j_i},$$
where $p_n$ denotes the number of decompositions of $n$ into the sum
of positive integers, where the order of summands is not important.
\end{corollary}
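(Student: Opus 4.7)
The plan is to combine the structure theorem for $\rc$-cross-sections of $\wisdd$ (Theorem~\ref{thm:rcross}) with the rigidity statement of Theorem~\ref{thm:perhapsthelast} to reduce the enumeration to a standard multiset count. By Theorem~\ref{thm:rcross}, every $\rc$-cross-section of $\wisdd$ is isomorphic to a product $\prod_{i=1}^{k}\bigl(R_i\pwr R(\overrightarrow{M_i})\bigr)$ indexed by a decomposition $\nd=M_1\sqcup\cdots\sqcup M_k$ and by a choice, for each $i$, of an $\rc$-cross-section $R_i$ of $\isdd$. By the Ganyushkin--Mazorchuk classification \cite{GM} there are exactly $p_m$ isomorphism classes of $\rc$-cross-sections of $\isdd$ and exactly $p_n$ isomorphism classes of $\rc$-cross-sections of $\isd$, the latter being parametrized by the type vector $(u_1,\dots,u_n)$.

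Next I would show that two such products are isomorphic if and only if they share the same outer type $(u_1,\ldots,u_n)$ and, for every block size $k$, the multiset of isomorphism classes of the inner factors $R_i$ attached to blocks of size $k$ coincide. The ``if'' direction is a direct construction: one composes a block-permuting conjugation (legal because the second component lives in $\isd$ and already yields conjugate outer cross-sections of the same type by \cite{GM}) with coordinatewise conjugations on the inner $\isdd$-factors. The ``only if'' direction is where I would invoke Theorem~\ref{thm:perhapsthelast}: every isomorphism $\vph$ between two such cross-sections is induced by some $\Theta=(\vartheta,\theta)\in\cS_m\wre\cS_n$, and $\theta$ must permute the outer block decomposition among blocks of the same cardinality, while for each $i$ the coordinate $\vartheta|_{M_i}$ conjugates $R_i$ to $R_{\theta(i)}'$; hence the multisets of isomorphism types inside each size class must agree.

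Finally, I would perform the count. Encoding the outer decomposition by $(j_1,\ldots,j_n)$ where $j_k$ is the number of blocks of size $k$, the admissibility condition is $j_1+2j_2+\cdots+nj_n=n$. For each fixed admissible profile, the isomorphism class of the cross-section is determined, independently in each size $k$, by a multiset of size $j_k$ drawn from the $p_m$ isomorphism classes of $\rc$-cross-sections of $\isdd$. The number of such multisets equals $\binom{p_m+j_k-1}{j_k}$, so the number of isomorphism classes with that profile is $\prod_{k}\binom{p_m+j_k-1}{j_k}$; summing over profiles yields the stated formula. The $\lc$-case follows verbatim via the anti-isomorphism $(f,a)\mapsto(f,a)^{-1}$.

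The main obstacle is the ``only if'' half of the isomorphism criterion in the second paragraph: one must be sure that a conjugation by an element of $\cS_m\wre\cS_n$ cannot nontrivially mix data between different outer blocks or between blocks of different sizes. This is exactly where Theorem~\ref{thm:perhapsthelast} is essential, because its hypothesis that every isomorphism is a conjugacy of the specified form is what forbids hidden combinatorial identifications and makes the multiset count tight.
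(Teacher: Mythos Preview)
Your proposal is correct and follows essentially the same route as the paper: use Theorem~\ref{thm:rcross} to put every $\rc$-cross-section into the product form, invoke Theorem~\ref{thm:perhapsthelast} to show that isomorphism reduces to matching multisets of inner $\isdd$-types within each outer block-size class, and then count multisets with the stars-and-bars identity $\binom{p_m+j_k-1}{j_k}$ before summing over partitions of $n$.

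One small point worth tightening when you write it out: the bare statement of Theorem~\ref{thm:perhapsthelast} only gives a conjugating element $\Theta=(\vartheta,\theta)$ with $\vartheta\colon\nd\to\cS_m$; it does not immediately say that $\vartheta$ is constant on each block $M_i$, which is what you implicitly use when you write ``$\vartheta|_{M_i}$ conjugates $R_i$ to $R'_{\theta(i)}$''. The paper handles this the same way you would have to, by appealing to the \emph{proof} of Theorem~\ref{thm:perhapsthelast} (Step~4 shows $\vartheta(x)=\theta_0$ for all $x$ in a block), so just make that dependence explicit.
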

\begin{proof}
Clearly, it is enough to compute the number of non-isomorphic
$\rc$-cross-sections of $\wisdd$. The number of non-isomorphic
$\lc$-cross-sections is the same.

Partition the set of all $\rc$-cross-sections of semigroup $\isdd$
into $p_m$ classes of isomorphic $\rc$-cross-sections and enumerate
them with the integers from 1 to $p_m$.

For a fixed partition $\set{M_1, M_2, \ldots, M_s}$ consider all
$\rc$-cross-sections having the form
\begin{equation} \label{eq:second} (R_1 \pwr
R(M_1))\times \ldots \times (R_k \pwr R(M_s))\end{equation} of semigroup
$\isdd\pwr \isd$ such that $\rc$-cross-section
$R_1=R(\overrightarrow{M_1}, \overrightarrow{M_2}, \ldots,
\overrightarrow{M_s})$. Define as above $j_k=\abs{\set{i  \med
\abs{M_i}=k}}$.

To each $\rc$-cross-section assign a sequence of number pairs
$$\big((|M_1|,i_1), \ldots, (|M_s|,i_s)  \big),\quad
i_l\in\set{1,2,\ldots, p_m},$$
 where $i_l$ is the number of the equivalence class of $R_i\sbs\isdd$. Two $\rc$-cross-sections of the form (\ref{eq:second}) are isomorphic iff their corresponding
sequences are equal up to the permutation of elements. Indeed, we
know from Theorem \ref{thm:perhapsthelast} that any isomorphism of
$\rc$-cross-sections is generated by a ``tree isomorphism''
$\Theta$. Moreover, it follows from the proof of this theorem that
each cofactor $R_k  \pwr R(M_s)$ is mapped to a similar cofactor
$R'_k  \pwr R(M'_k)$ by $\Theta$, and $R'_k \simeq R_k$,
$\abs{M'_k}=\abs{M_s}$. On the other hand, if $M_j=M'_{\sigma(j)}$
and for each $j=1, \ldots, k$  \quad $\psi_j:R'_j \rightarrow
R_{\sigma(j)}$ is an isomorphism, then isomorphism between
$\rc$-cross-sections $(R_1 \pwr R(M_1))\times \ldots \times (R_s
\pwr R(M_s))$ and $(R'_1 \pwr R(M'_1))\times \ldots \times (R'_s
\pwr R(M'_s))$ is established by the map
$$(f_1,a_1)\times\ldots \times (f_s,a_s) \mapsto \big(\psi_1(f_{\sigma(1)}),a_{\sigma(1)}\big)\times \ldots \times \big(\psi_1(f_{\sigma(s)}),a_{\sigma(s)}\big).$$

So, among permutationally-equivalent sequences we can choose a
``canonical'' representation, for instance, we can arrange as
$$\left((1,i_{11}), \ldots,
(1,i_{1j_1}),(2,i_{21}),\ldots,(2,i_{2j_2})  \ldots, \right)$$
$$1\leq i_{11}\leq \ldots \leq i_{1j_1} \leq p_m,\  1\leq i_{21}
\leq \ldots, i_{2j_2} \leq p_m,\  \ldots$$ Thus, we have to compute
the number of representatives in order to find the number of
non-isomorphic $\rc$-cross-sections. To get their number we have  to find the number  of non-decreasing functions from $\set{1,2, \ldots, j_l}$
to $\set{1,2,\ldots, p_m}$.

The number of non-decreasing functions from $\set{1,2, \ldots, j_l}$
to $\set{1,2,\ldots, p_m}$ is equal to the number of solutions of
the equation
$$x_1+\ldots+x_{p_m}=j_l,$$ that is $\binom{p_m+j_l-1}{j_l}$.
So the number of non-isomorphic $\rc$-cross-sections of form
(\ref{eq:second}) is
$$\prod_{i=1}^m\binom{p_m+j_i-1}{j_i}.$$

Summing this over forms (\ref{eq:second}), i.e. over all partitions
of the integer $n$, we get the the number of all non-isomorphic
$\rc$-cross-section of the semigroup $\isdd\pwr \isd$:
$$\sum_{\substack{j_1,j_2,\dots,j_n\ge
0\\j_1+2j_2+\dots+nj_n=n}}\prod_{i=1}^m\binom{p_m+j_i-1}{j_i}.$$
\end{proof}


\begin{thebibliography}{0}

\bibitem{CoReilly} D. F. Cowan, N. R. Reilly,
Partial cross-sections of symmetric inverse semigroups. \emph{Int.
J. Algebra Comput.} \textbf{3}(1995), 259--287.
\bibitem{GM1}
O. Ganyushkin, V. Mazorchuk, \emph{Classical Finite Transformation
Semigroup. An Inroduction}. Series: Algebra and Apllications,
Vol.~9. Springer, 2009.
\bibitem{GM}
O. Ganyushkin, V. Mazorchuk, $\lc$- and $\rc$-Cross-Sections in
$\isd$. \emph{Communications in Algebra}. \textbf{9}(2003),
4507--4523.

\bibitem{eilenberg} S. Eilenberg. \emph{Automata, languages and machines}. Vol.~B. Academic Press, 1976.

\bibitem{comb} Ye. Kochubinska, Combinatorics of partial wreath power of finite inverse
symmetric semigroup $\mathcal{IS}_d$. \emph{Algebra Discrete Math}.
\textbf{1}(2007), 49--60.

\bibitem{endm} Ye. Kochubinska, On cross-sections of partial wreath
product of inverse semigroups. \emph{Electron. Notes Discrete Math}.
\textbf{28}\ (2007), 379--386.

\bibitem{Meldrum} J. P. D. Meldrum, \emph{Wreath products of groups and semigroups}.
Pitman Monographs and Surveys in Pure and Applied Mathematics,
vol.~74. Harlow, Essex: Longman Group Ltd., 1995.

\bibitem{vasia1} V. Pyekhtyeryev, $\hc$- and $\rc$-cross-sections of the full finite semigroup
$T_n$. \emph{Algebra Discrete Math.} \textbf{3}(2003), 82-88.
\bibitem{vasia2} V. Pyekhtyeryev, $\rc$-cross-sections of the
semigroup ${\mathcal{T}_X}$. \emph{Mat. Stud.} \textbf{2}(2004),
133--139.

\bibitem{renner} Lex E. Renner, Analogue of the Bruhat decomposition for
algebraic monoids. II: The length function and the trichotomy.
\emph{J. Algebra}. \textbf{2}(1995), 697--714.

\end{thebibliography}
\end{document}